\theoremstyle{cupthm}
\newtheorem{Theorem}{Theorem}[section]
\newtheorem{Lemma}[Theorem]{Lemma}
\theoremstyle{cupdefn}
\newtheorem{Definition}[Theorem]{Definition}
\theoremstyle{cuprem}
\newtheorem{Remark}[Theorem]{Remark}
\numberwithin{equation}{section}
\newtheorem{Conjecture}[Theorem]{Conjecture}
\newtheorem{Example}[Theorem]{Example}
\begin{document}
\def\F{{\mathbb F}}

\def\A{{\cal A}}

\def\L{{\cal L}}

\def\SS{{\cal S}}

\def\B{{\cal B}}

\def\K{{\mathbb K}}

\def\C{{\cal C}}

\def\D{{\cal D}}

\def\R{{\cal R}}

\def\P{{\cal P}}
\def\Z{{\mathbb Z}}
\def\T{{\cal T}}
\def\X{{\cal X}}
\def\N{{\cal N}}
\def\FF{{\cal F}}
\def\DD{{\mathbb D}}
\def\RR{{\mathbb R}}
\def\NN{{\mathbb N}} \def\CC{{\mathbb C}} \def\ZZ{{\mathbb Z}}
\def\chr{{\rm char}\,} \def\Re{{\rm Re}\,} \def\Im{{\rm Im}\,}
\newcommand{\diag}{{\text {diag}}}

\runningtitle{An upper bound on the length of an algebra}
\title{An Upper Bound on the Length of an Algebra and Its Application to the Group Algebra of the Dihedral Group}
\author[1]{M. A. Khrystik}
\address[1]{HSE University, Faculty of Computer Science, Moscow, 101000, Russia.}
\address[2]{Moscow Center of Fundamental and Applied Mathematics, Moscow, 119991, Russia.\email{good\_michael@mail.ru}}

\authorheadline{M. A. Khrystik}


\support{This research was supported by Russian Science Foundation, grant 20-11-20203, https://rscf.ru/en/project/20-11-20203/}

\begin{abstract}
Let $\A$ be an $\F$-algebra and let $\SS$ be its generating set. The length of $\SS$ is the smallest number $k$ such that $\A$ equals the $\F$-linear span of all products of length at most $k$ of elements from $\SS$. The length of $\A$, denoted by $l(\A)$, is defined to be the maximal length of its generating set. In this paper, it is shown that the $l(\A)$ does not exceed the maximum of $\dim \A / 2$ and $m(\A)-1$, where $m(\A)$ is the largest degree of the minimal polynomial among all elements of the algebra $\A$. For arbitrary odd $n$, it is proven that the length of the group algebra of the dihedral group of order $2n$ equals $n$.
\end{abstract}

\classification{primary 16S34; secondary 20C05, 20C30}
\keywords{Finite-dimensional algebras, length of an algebra, group algebras, dihedral group, representations of dihedral groups.}

\maketitle

\section{Introduction}

All algebras considered in this paper are {\bf associative finite-dimensional algebras with an identity over a field}. First, we recall the notion of the {\em length} of the algebra $\A$.

Let $\A$ be an algebra. Any product of a finite number of elements from a finite subset $\SS \subset \A$ is called a word over the alphabet $\SS$. The length of a word equals the number of letters in this product that are different from $1_{\A}$. We consider $1_{\A}$ to be an empty word of length 0.

If $\SS$ is a generating system (or a generating set) of the algebra $\A$, i.e., $\A$ is the minimal subalgebra of $\A$ containing $\SS$, then any element of the algebra $\A$ can be expressed as a linear combination of words over $\SS$. The minimal $k$ such that all elements of $\A$ can be expressed using words of length no more than $k$ is called the length of the generating system $\SS$. The length of the algebra $\A$ is defined as the maximum length among its generating systems and will be denoted by $l(\A)$ (see definition \ref{alg_len}).
In defining the length of algebra $ \A $, we consider the set of {\bf all} generating systems for $ \A $. This explains the difficulty of calculating the length even for classical algebras.

The general problem of calculating the length was first formulated by A.~Paz in 1984 for the full matrix algebra $M_n(\F)$ over a field in \cite{Paz} and still remains open.

\begin{Conjecture}[\cite{Paz}]
Let $\F$ be an arbitrary field. Then $l(M_n(\F))=2n-2.$
\end{Conjecture}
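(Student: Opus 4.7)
Since Paz's conjecture has been open for nearly four decades, what follows is an attack plan rather than a complete proof.

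\textbf{Lower bound $l(M_n(\F))\ge 2n-2$.} The plan is to exhibit an explicit two-element generating set whose length equals $2n-2$. A standard candidate is the pair $A=\diag(1,2,\ldots,n)$ together with a single nilpotent Jordan block $B$ of size $n$. A dimension count on the filtration $V_k=\mathrm{span}\{\,\text{words in } A,B \text{ of length }\le k\,\}$ shows $\dim V_{2n-3}<n^2$, forcing the length of this generating set to be at least $2n-2$. This direction is well-understood in the literature and reduces to a finite linear algebra verification.

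\textbf{Upper bound $l(M_n(\F))\le 2n-2$.} This is where the real difficulty lies. Applied to $M_n(\F)$, the paper's main inequality yields only $l(M_n(\F))\le \max(n^2/2,\,n-1)=n^2/2$, recovering Paz's original quadratic bound and leaving a gap of order $n$. The plan would be: first, reduce to the case $|\SS|=2$, since appending to a generating set cannot increase its length by a standard monotonicity argument; second, fix a generating pair $\{A,B\}$ and analyze the successive quotients $V_{k+1}/V_k$ in the filtration above, using that both $A$ and $B$ satisfy their characteristic polynomials (of degree $n$) by Cayley--Hamilton; third, combine these polynomial relations with the bilinear structure of matrix multiplication to show that $\dim V_{2n-2}=n^2$ in every case.

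\textbf{Main obstacle.} The third step has resisted decades of effort and is where the plan would break. The best-known unconditional upper bounds on $l(M_n(\F))$ remain super-linear in $n$ and proceed through delicate combinatorial counting of nonzero entries in long products of matrices. A proof of the conjecture would likely require a sharper use of the specific representation theory of $M_n(\F)$, presumably by classifying the ``slow'' generating pairs that minimize the dimension jumps $\dim V_{k+1}-\dim V_k$ and showing that in the worst case these jumps cannot remain small past step $2n-2$. I would expect the present paper's contribution to enter only as a coarse preprocessing bound; closing the remaining factor of roughly $n/4$ is where a genuinely new idea is needed.
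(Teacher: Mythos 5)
The statement you were asked to prove is Paz's conjecture, which the paper explicitly records as an open problem and does not prove; your submission, by its own admission, is an attack plan rather than a proof, so there is no argument here to certify. Your situational assessment is accurate: the lower bound $l(M_n(\F))\ge 2n-2$ is classical, the upper bound is the genuinely open part, and Theorem \ref{ldm} of the paper applied to $M_n(\F)$ gives only $\max\{n-1,n^2/2\}=n^2/2$, a quadratic bound that does not even improve on Paz's original estimate and is far from the conjectured linear answer. Nothing in the paper's machinery (word equivalence, the dimension-jump argument, or the bicirculant representation) is claimed to close, or comes close to closing, that gap.

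The one concrete ingredient you do offer is flawed: the pair $A=\diag(1,2,\ldots,n)$ together with a single nilpotent Jordan block $B$ does not generate $M_n(\F)$ at all, since every word in a diagonal matrix and an upper-triangular nilpotent matrix is upper triangular, so $\L(\{A,B\})$ is contained in the algebra of upper triangular matrices (of dimension $n(n+1)/2<n^2$). A correct witness for the lower bound is, for example, the shift $N=E_{1,2}+E_{2,3}+\cdots+E_{n-1,n}$ together with its transpose: here $E_{n,n}=(N^{T})^{n-1}N^{n-1}$ requires a word of length $2n-2$, and the standard filtration count confirms that no shorter expression exists. Beyond that correction, your third step (forcing $\dim V_{2n-2}=n^2$ for every generating pair) is precisely the open problem restated, so the proposal contains no proof of the conjecture --- which is the expected outcome, since none is known.
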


A nontrivial upper bound on $l(\A)$ in terms of $\dim \A$ and $m(\A)$ (the largest degree of the minimal polynomial among all elements of the algebra $\A$) was obtained in \cite{Pap} by C.~Pappacena. The study of upper bounds on length in these terms will be continued in this paper.

Calculating the length in general is a rather difficult task.
The main algebraic properties of the length function were studied by O.V.~Markova in the work \cite{OVM}.

The question of calculating the lengths of group algebras is of particular interest. Due to their matrix representations, solving this question is closely linked to solving Paz's problem.
For group algebras of small-order groups it is possible to calculate the length precisely over arbitrary fields. For the permutation group $S_3$, Klein four-group $K_4$, and quaternion group $Q_8$, the lengths were found by A.E. Guterman and O.V. Markova in \cite{GutM18,GutM19}.

Systematic study of the general problem of finding the lengths of group algebras of finite abelian groups was dedicated to the joint works of the author with A.E. Guterman and O.V. Markova \cite{GMK1,GutKhM20p2}. The works of O.V.~Markova \cite{Mar20} and the author \cite{Kh23} continued the study of the lengths of group algebras of finite abelian groups in the modular case.

Studying all non-abelian groups appears to be too difficult due to the diversity of their structure. Therefore, it is proposed to study the length function separately for families of classic non-abelian groups. Thus, in the joint work of the author with O.V. Markova \cite{KhMar20}, the study of the lengths of group algebras of dihedral groups began, and the length was calculated in the semisimple case. This series of groups in the semisimple case is a natural next step after the abelian case. Indeed, for group algebras of abelian groups in the decomposition into a direct sum of matrix algebras all terms are one-dimensional, whereas the sizes of the matrix algebras in the decomposition into a direct sum of group algebras of dihedral groups do not exceed two. The work \cite{KhMar20POMI} continued the study of the lengths of group algebras of dihedral groups of order $2^k$ and calculated their length in the modular case. This paper will consider the length of the group algebra of the dihedral group over an arbitrary field.

In Section \ref{main_def}, the main definitions and notations of the considered theory are introduced.

In Section \ref{genbound}, the upper bound on the length is proven.

In Section \ref{lendih}, the concept of bicirculant algebra is introduced and studied, in particular, its length is calculated. A bicirculant representation of the group algebra of the dihedral group is constructed and its properties are studied. Using the bicirculant representation, $l(\F \mathcal D_n)$ and $m(\F \mathcal D_n)$ are estimated.

\section{Main Definitions and Notations}\label{main_def}

Denote by $\langle S \rangle$
the linear span (the set of all finite
linear combinations with coefficients from $\F$) of a subset $S$ of some vector space over $\F$.

Let $B=\{b_1,\ldots,b_m\}$ be a non-empty finite set (alphabet). Finite sequences
of letters from $B$ are called words.  Let $B^*$ denote the set
of all words in the alphabet $B$, $F_B$ be the free semigroup over
the alphabet $B$, i.e. $B^*$
 with the operation of concatenation.

\begin{Definition}\label{word_len}  {\em The length\/} of the word
$b_{i_1}\ldots b_{i_t}$, where $b_{i_j}\in B$, is equal to $t$. We
will consider $1$ (the empty word) a word from the elements $B$ {\em of
length
$0$\/}.
\end{Definition}

Let $B^i$ denote the set
of all words in the alphabet $B$ of length no greater than $i$, $i\geq 0$. Then
by
$B^{=i}$ denote the set of all words in the alphabet $B$ of length
equal to $i$, $i\geq 1$.

\begin{Remark} Products of elements from the generating set $\SS$ can
be considered as images of elements of the free semigroup $F_{\SS}$
under the natural homomorphism, and they can also be called words
from the generators and use the natural notations $\SS^i$ and
$\SS^{=i}$.

\end{Remark}

Denote by $\L_i(\SS)$
the linear span of words from $\SS^i$. Note that $\L_0(\SS)=\langle
1_{\A}\rangle=\F$.
Let also $\L(\SS)=\bigcup\limits_{i=0}^\infty \L_i(\SS)$
denotes the linear span of all words in the alphabet $\SS=\{a_1,\ldots,
a_k\}$.

\begin{Definition}\label{sys_len} {\em The length of a generating
system
$\SS$\/} of algebra $\A$ is $l(\SS)=\min\{k\in \ZZ_+: \L_k(\SS)=\A\}$. \end{Definition}

\begin{Definition}\label{alg_len}
 {\em The length of an algebra $\A$} is $l(\A)=\max \{l(\SS):  \L(\SS)=\A\}$. \end{Definition}

Let $\A$ be an algebra, $\tau \in \A$. Denote the minimal polynomial of $\tau$ by $\mu_{\tau}(x)$. Then $m(\tau)=\deg \mu_{\tau}(x)$, $m(\A)=\max_{\tau \in \A} m(\tau)$.

Denote by $\F G$ or $\F[G]$ the group algebra of the group $G$ over the field $\F$, $E_{i,j}$ for the matrix unit, $\mathcal D_n$ for the dihedral group of order $2n$, $S_n$ for the symmetric group.

\begin{Definition}\label{equiv}
We say that two words $u$ and $v$ of length $i$ from the generators are {\em equivalent}, if $u-\alpha v\in \L_{i-1}(\SS)$ for some nonzero $\alpha \in \F$. We will use the notation $u\sim v$ in this case.
\end{Definition}

\begin{Definition}
We say that a word $u$ of length $i$ from the generators {\em reducible} if $u\in \L_{i-1}(\SS)$. Otherwise, we will call the word {\em irreducible}.
\end{Definition}

\section{General Bound on Length}\label{genbound}

\subsection{Equivalence of Words}\

Before proceeding to prove the main statement of the section let us note some properties of the introduced concept of word equivalence as it is significantly used in the proof of this statement.

\begin{Lemma}\label{eqrel}
Equivalence of words is an equivalence relation on the set of words.
\end{Lemma}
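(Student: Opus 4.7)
The plan is to verify each of the three axioms of an equivalence relation directly from Definition \ref{equiv}, exploiting only the fact that $\L_{i-1}(\SS)$ is an $\F$-linear subspace.

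For \emph{reflexivity}, given a word $u$ of length $i$, I would take $\alpha = 1$ and note that $u - 1 \cdot u = 0 \in \L_{i-1}(\SS)$, so $u \sim u$.

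For \emph{symmetry}, suppose $u \sim v$, so that $u - \alpha v \in \L_{i-1}(\SS)$ for some nonzero $\alpha \in \F$. Since $\L_{i-1}(\SS)$ is closed under scalar multiplication, multiplying by $-\alpha^{-1}$ gives $v - \alpha^{-1} u \in \L_{i-1}(\SS)$; as $\alpha^{-1}$ is also nonzero in $\F$, this yields $v \sim u$.

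For \emph{transitivity}, suppose $u \sim v$ and $v \sim w$, so there exist nonzero $\alpha, \beta \in \F$ with $u - \alpha v \in \L_{i-1}(\SS)$ and $v - \beta w \in \L_{i-1}(\SS)$. Adding the first to $\alpha$ times the second, and using that $\L_{i-1}(\SS)$ is a linear subspace, I get
\[
u - \alpha\beta\, w \;=\; (u - \alpha v) + \alpha(v - \beta w) \;\in\; \L_{i-1}(\SS).
\]
Since $\alpha\beta \neq 0$, this shows $u \sim w$. There is no real obstacle here — the lemma is essentially a direct unpacking of the definition, and the only substantive point is that nonzero scalars form a group under multiplication so that the witness $\alpha$ can be inverted and composed.
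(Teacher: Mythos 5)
Your proof is correct and follows essentially the same route as the paper's: reflexivity via $\alpha=1$, symmetry by scaling the witness by $-\alpha^{-1}$, and transitivity by adding $\alpha$ times the second relation to the first, all relying only on $\L_{i-1}(\SS)$ being a linear subspace. No differences worth noting.
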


\begin{proof}

{\em Reflexivity.} $u-\alpha u \in \L_{i-1}(\SS)$ with $\alpha=1.$

{\em Symmetry.} Let  $u-\alpha v \in \L_{i-1}(\SS)$. Then, by multiplying the element  $u-\alpha v$ by $-\alpha^{-1}$, we get  $v-\alpha^{-1} u \in \L_{i-1}(\SS).$

{\em Transitivity.} Let  $u-\alpha_1 v \in \L_{i-1}(\SS)$, $v-\alpha_2 w \in \L_{i-1}(\SS)$. Then, by adding the second element multiplied by $\alpha_1$ to the first one, we obtain $u-\alpha_1 \alpha_2 w \in \L_{i-1}(\SS).$

\end{proof}

\begin{Lemma}\label{eqred}
Let $u \sim v$. Then $u$ is reducible if and only if $v$ is reducible.
\end{Lemma}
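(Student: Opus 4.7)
The plan is to unwind the definitions and exploit the fact that $\L_{i-1}(\SS)$ is an $\F$-linear subspace of $\A$. By hypothesis $u \sim v$, so there exists a nonzero scalar $\alpha \in \F$ with $u - \alpha v \in \L_{i-1}(\SS)$. Both $u$ and $v$ are words of length $i$, and reducibility means lying in $\L_{i-1}(\SS)$, so the entire argument is linear-algebraic: one just needs to rearrange this defining relation.

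Concretely, I would first prove the forward implication. Assume $u$ is reducible, i.e.\ $u \in \L_{i-1}(\SS)$. Then
\[
\alpha v \;=\; u \;-\; (u - \alpha v)
\]
is a difference of two elements of $\L_{i-1}(\SS)$, hence lies in $\L_{i-1}(\SS)$. Since $\alpha \neq 0$ and $\L_{i-1}(\SS)$ is closed under multiplication by scalars, $v = \alpha^{-1}(\alpha v) \in \L_{i-1}(\SS)$, so $v$ is reducible.

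For the converse, I would simply invoke Lemma \ref{eqrel}: the relation $\sim$ is symmetric, so $v \sim u$ and the same argument (with the roles of $u$ and $v$ interchanged, using the scalar $\alpha^{-1}$ produced in the proof of symmetry) yields that $v$ reducible implies $u$ reducible.

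There is essentially no obstacle here; the statement is a direct consequence of the fact that $\L_{i-1}(\SS)$ is a subspace together with $\alpha \neq 0$. The only thing to be careful about is to explicitly use nonzeroness of $\alpha$ when dividing, and to cite Lemma \ref{eqrel} for symmetry rather than redoing the computation twice.
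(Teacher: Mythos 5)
Your proof is correct and is exactly the argument the paper has in mind (the paper simply labels it ``Straightforward''): rearrange $u-\alpha v\in\L_{i-1}(\SS)$ using that $\L_{i-1}(\SS)$ is a subspace and $\alpha\neq 0$, then get the converse from the symmetry established in Lemma~\ref{eqrel}. Nothing to add.
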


\begin{proof}
Straightforward.

\end{proof}

\begin{Lemma}\label{eqsub}
Let the word $u$ be irreducible. Then any subword of $u$ is irreducible.
\end{Lemma}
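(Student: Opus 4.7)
The plan is to prove the contrapositive: if some subword of $u$ is reducible, then $u$ itself is reducible. Here I interpret a \emph{subword} in the standard sense of a contiguous factor, so that $u$ admits a factorization $u = p \, w \, q$ where $p, q$ are (possibly empty) words over $\SS$ and $w$ is the subword in question, with $|u| = |p| + |w| + |q|$.

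First I would assume that $w$ is reducible, so by definition $w \in \L_{|w|-1}(\SS)$. This means we can write
\[
w = \sum_{j} \alpha_j w_j,
\]
where each $w_j \in \SS^{|w|-1}$ and $\alpha_j \in \F$. Next I would left-multiply by $p$ and right-multiply by $q$, using associativity and bilinearity of multiplication in $\A$, to obtain
\[
u = p \, w \, q = \sum_j \alpha_j \, (p \, w_j \, q).
\]
Each summand $p \, w_j \, q$ is a word over $\SS$ of length at most $|p| + (|w|-1) + |q| = |u| - 1$, so $u \in \L_{|u|-1}(\SS)$. This contradicts the irreducibility of $u$, completing the contrapositive.

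There is no serious obstacle here: the argument is a one-line substitution combined with the definitions in Section \ref{main_def}. The only point to be slightly careful about is the convention for subwords (contiguous factors vs. arbitrary subsequences) and the handling of the empty-word cases $p = 1_{\A}$ or $q = 1_{\A}$, which are covered by treating $1_{\A}$ as the word of length $0$ exactly as in Definition \ref{word_len}.
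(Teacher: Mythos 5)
Your argument is correct and is precisely the routine substitution argument that the paper omits by writing ``Straightforward'': a reducible subword $w\in\L_{|w|-1}(\SS)$ yields, after multiplying its expansion by $p$ on the left and $q$ on the right, an expression of $u$ as a linear combination of words of length at most $|u|-1$, contradicting irreducibility. No discrepancy with the paper's intended proof.
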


\begin{proof}
Straightforward.

\end{proof}

\begin{Lemma}\label{eqrep}
Let the word $w$ of length $i$ contain a subword $u$ of length $j$, $u \sim v$. Then $w \sim w'$, where $w'$ is a word obtained from $w$ by replacing the subword $u$ with the subword $v$.
\end{Lemma}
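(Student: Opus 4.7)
The plan is to exploit the definition of equivalence directly by writing $w$ as a concatenation involving $u$ and substituting the $\L_{j-1}(\SS)$-representative of $v$ for $u$. Concretely, since the subword $u$ of length $j$ sits inside $w$ of length $i$, I would write
\[
w = \alpha \, u \, \beta
\]
where $\alpha, \beta \in \SS^{*}$ are (possibly empty) words with $|\alpha| + |\beta| = i - j$.

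From $u \sim v$ there exists a nonzero $\gamma \in \F$ and an element $r \in \L_{j-1}(\SS)$ such that $u = \gamma v + r$. Left-multiplying by $\alpha$ and right-multiplying by $\beta$ then gives
\[
w = \alpha u \beta = \gamma \, \alpha v \beta + \alpha r \beta = \gamma \, w' + \alpha r \beta.
\]
Since $r$ is an $\F$-linear combination of words of length at most $j-1$, the element $\alpha r \beta$ is an $\F$-linear combination of words of length at most $|\alpha| + (j-1) + |\beta| = i - 1$, hence $\alpha r \beta \in \L_{i-1}(\SS)$. Therefore $w - \gamma w' \in \L_{i-1}(\SS)$ with $\gamma \neq 0$, which is exactly $w \sim w'$ by Definition \ref{equiv}.

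There is no real obstacle here; the only thing to be careful about is the length bookkeeping, namely that multiplying an element of $\L_{j-1}(\SS)$ on both sides by words of total length $i-j$ lands inside $\L_{i-1}(\SS)$, and that the scalar $\gamma$ coming from $u \sim v$ is preserved under this left/right multiplication and therefore serves as the equivalence scalar between $w$ and $w'$.
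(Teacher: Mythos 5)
Your proof is correct and follows essentially the same route as the paper: write $w$ as a concatenation with $u$ in the middle, multiply the relation $u-\gamma v\in\L_{j-1}(\SS)$ on the left and right by the flanking words, and observe that the result lands in $\L_{i-1}(\SS)$. Your version just makes the length bookkeeping explicit, which the paper leaves implicit.
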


\begin{proof}
By condition, $u-\alpha v \in \L_{j-1}(\SS)$, $w=w_1uw_2$, for some words $w_1$, $w_2$. Then, by multiplying the expression $u-\alpha v$ on the left by $w_1$ and on the right by $w_2$, we get $w-\alpha w' \in \L_{i-1}(\SS).$
\end{proof}

\subsection{Estimating $l(\A)$ Using $\dim \A$ and $m(\A)$}\

\begin{Theorem}\label{ldm}
 Let $\A$ be an associative finite-dimensional algebra with an identity. Then
$$l(\A)\leq max\{m(\A)-1,\frac{\dim\A}{2}\}.$$
\end{Theorem}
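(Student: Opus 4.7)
The plan is to analyse the filtration of $\A$ by word length with respect to a generating set $\SS$ realizing $l(\A)$. If $\SS = \{a\}$ consists of a single generator, then $\A \subseteq \F[a]$ is spanned by $1, a, \ldots, a^{m(a)-1}$, so $l(\SS) \leq m(a) - 1 \leq m(\A) - 1$ and the bound is trivial. Henceforth I would assume $|\SS| \geq 2$ and set $k := l(\SS) \geq 1$.

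Introduce $d_i := \dim \L_i(\SS) - \dim \L_{i-1}(\SS)$, so that $d_0 = 1$, $d_i \geq 1$ for $1 \leq i \leq k$ (because $\L_i(\SS)$ strictly contains $\L_{i-1}(\SS)$ in that range), and $\dim \A = \sum_{i=0}^{k} d_i$. The condition $d_i \geq 2$ is equivalent, via Definition \ref{equiv}, to the existence of two non-equivalent irreducible words of length $i$. If $d_i \geq 2$ for every $i \in \{1, \ldots, k-1\}$, then $\dim \A \geq 1 + 2(k-1) + 1 = 2k$, giving $k \leq \dim \A/2$ and the bound follows. It therefore remains to handle the case when $d_{i_0} = 1$ for some $i_0 \in \{1, \ldots, k-1\}$; equivalently, all irreducible words of length $i_0$ lie in a single $\sim$-class. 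In this situation the goal is to prove $k \leq m(\A) - 1$.

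This second case is the technical heart of the argument and is where I expect the main obstacle. The plan is to pick an irreducible word $w = s_1 s_2 \cdots s_k$ of length $k$ (which exists since $\L_{k-1}(\SS) \subsetneq \L_k(\SS) = \A$), and to note via Lemma \ref{eqsub} that each consecutive subword $s_j s_{j+1} \cdots s_{j+i_0-1}$ ($1 \leq j \leq k - i_0 + 1$) is irreducible, so by $d_{i_0} = 1$ all of these subwords are pairwise equivalent. Propagating these equivalences via Lemma \ref{eqrep} yields rigid scalar relations among the generators $s_1, \ldots, s_k$ modulo $\L_{i_0}(\SS)$. Iterating these relations, in the spirit of Fine--Wilf periodicity arguments on words, should force the letter sequence $s_1, \ldots, s_k$ to be ``periodic'' modulo the filtration and allow one to distil a single element $\tau \in \A$ whose powers $1, \tau, \tau^2, \ldots, \tau^k$ remain linearly independent; this would yield $m(\tau) \geq k+1$ and hence $k \leq m(\A) - 1$.

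The main difficulty is that Lemma \ref{eqrep} only controls words up to lower-length remainders, so careful bookkeeping across the filtration $\L_0(\SS) \subset \L_1(\SS) \subset \cdots \subset \L_k(\SS) = \A$ is required to keep the periodicity data coherent and to verify that the extracted element really has a minimal polynomial of the advertised degree. I expect that the irreducibility of $w$, together with the transitivity and substitution properties recorded in Lemmas \ref{eqrel}--\ref{eqrep}, is exactly what makes this bookkeeping feasible.
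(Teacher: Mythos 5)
Your overall strategy coincides with the paper's: the same dichotomy on the increments $d_i$, the same count $\dim\A\geq 1+2(k-1)+1=2k$ when $d_i\geq 2$ for all $1\leq i\leq k-1$, and the same goal of extracting an element with minimal polynomial of degree at least $k+1$ when some $d_{i_0}=1$. But the decisive step --- the case $d_{i_0}=1$ --- is left as a plan (``should force'', ``I expect'') rather than proved, and that is a genuine gap: nothing in the proposal identifies the element $\tau$ or shows that its powers up to $\tau^{k}$ stay outside the lower filtration levels. The appeal to Fine--Wilf periodicity is also more machinery than is needed and does not by itself produce the element.

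The gap is closed by a direct iteration which you should carry out explicitly. Since $d_{i_0}=1$, any two irreducible words of length $i_0$ are equivalent; in particular the consecutive subwords $s_1\cdots s_{i_0}$ and $s_2\cdots s_{i_0+1}$ of your irreducible word $w=s_1\cdots s_k$ are equivalent (both are irreducible by Lemma \ref{eqsub}, and $i_0\leq k-1$ guarantees the second exists). Replacing the prefix by the shifted subword via Lemma \ref{eqrep} yields a new irreducible word of length $k$ in which $s_1$ has disappeared and $s_{i_0+1}$ occurs twice in a row; after $i_0$ such replacements the prefix becomes $s_{i_0+1}^{i_0+1}$. Now $s_{i_0+1}^{i_0}$ is irreducible, so every word of length $i_0$ is either reducible or equivalent to it, and further substitutions of this kind convert the entire word into $s_{i_0+1}^{k}$, which is therefore irreducible. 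Consequently $1,s_{i_0+1},\dots,s_{i_0+1}^{k}$ are linearly independent (a dependence with top power $j\leq k$ would put $s_{i_0+1}^{j}$ in $\L_{j-1}(\SS)$, contradicting Lemma \ref{eqsub}), whence $m(\A)\geq m(s_{i_0+1})\geq k+1$ and $k\leq m(\A)-1$. The element you were trying to ``distil'' is simply the generator $s_{i_0+1}$; no periodicity lemma is required.
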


\begin{proof} Let $l(\A)\geq m(\A)$ (otherwise the statement is proven). Let $\SS$ be a generating set of length $l(\A)$ of the algebra $\A$ (in the case of other generating sets the length of the algebra will be no greater). Consider an irreducible word $a_1a_2\cdots a_{l(\A)}$ of length $l(\A)$ in the alphabet $\SS$ (such exists by definition of the length of the algebra). We will prove that $\forall k\in [1,l(\A)-1]$ it holds that $\dim\L_k(\SS)-\dim \L_{k-1}(\SS)\geq 2.$

We will reason by contradiction. Suppose  $\exists k\in [1,l(\A)-1]$ such that  $\dim\L_k(\SS)-\dim \L_{k-1}(\SS)=1$ (this difference cannot be zero by definition of the length of the algebra). We will break the reasoning into steps and lead it to a contradiction.

{\em First step.} The word  $a_1a_2\cdots a_{l(\A)}$ is irreducible. Therefore, its subword $a_1a_2\cdots a_k$ is irreducible by Lemma \ref{eqsub}. By assumption $a_2a_3\cdots a_{k+1} \sim a_1a_2\cdots a_k$ (here we use the fact that $k$ is no greater than $l(\A)-1$). Indeed, if this were not the case, we would get $\dim\L_k(\SS)-\dim \L_{k-1}(\SS)\geq 2$, since the dimension would increase by at least 2 due to these two words. Thus,  $a_1a_2\cdots a_{l(\A)} \sim  a_2
a_3\cdots a_k a_{k+1} a_{k+1} a_{k+2} \cdots a_{l(\A)}$ by Lemma \ref{eqrep}. Therefore, the word $ a_2 a_3\cdots a_k a_{k+1} a_{k+1} a_{k+2} \cdots a_{l(\A)}$ is irreducible.

{\em Second step.} Now consider the irreducible word $ a_2 a_3\cdots a_k a_{k+1} a_{k+1} a_{k+2} \cdots a_{l(\A)}$ of length $l(\A)$ obtained in the previous step. By reasoning similarly (considering subwords of length $k$ starting from the first and second letters), we will get rid of the letter $a_2$ similarly to how we got rid of the letter $a_1$ in the first step. We obtain that the word $ a_3 a_4\cdots a_k a_{k+1} a_{k+1} a_{k+1} a_{k+2} \cdots a_{l(\A)}$ is irreducible.

After conducting $k$ steps of this reasoning, we obtain that the word  $a_{k+1}\cdots a_{k+1} a_{k+2} \cdots a_{l(\A)}$ of length $l(\A)$ is irreducible. Now we can proceed to the last step and obtain a contradiction.

{\em $(k+1)$-st step.} The word  $a_{k+1}^{k+1} a_{k+2} \cdots a_{l(\A)}$ is irreducible.  Therefore, its subword $a_{k+1}^{k}$ is irreducible. By assumption, all words of length $k$ are expressed through the word $a_{k+1}^{k}$ and words of shorter length. Thus, $a_1a_2\cdots a_{l(\A)} \sim a_{k+1}^{l(\A)}$. Therefore, the word $a_{k+1}^{l(\A)}$ is irreducible and  $l(\A)< m(\A)$. Contradiction.

We return to the proof of the main statement. Represent the dimension of the algebra in the following form $\dim \A=\dim\L_{l(\A)}(\SS)=(\dim\L_{l(\A)}(\SS)-\dim\L_{l(\A)-1}(\SS))+(\dim\L_{l(\A)-1}(\SS)-\dim\L_{l(\A)-2}(\SS))+\cdots+(\dim\L_1(\SS)-\dim\L_0(\SS))+\dim\L_0(\SS)$. The first term of this sum is not less than 1, the last one equals 1, and all the others are not less than 2. Thus,  $\dim \A \geq 1+2(l(\A)-1)+1$. Therefore, $l(\A) \leq \frac{\dim\A}{2}$. Thus, $l(\A)\leq max\{m(\A)-1,\frac{\dim\A}{2}\}.$

\end{proof}

\subsection{Comparison with Other Estimates}\

In conclusion of this section we will compare the obtained bound with other similar bounds.

Let us compare the obtained bound with the following bound presented in the joint work of the author with O.V. Markova.

\begin{Lemma}[{\cite[Lemma 2.10]{KhMar20POMI}}]\label{d<m+4}
Let $\mathcal A$ be an $\F$-algebra, $\dim\mathcal A\leq m(\mathcal A)+4$, $m({\mathcal A}) \geq 3$. Then $l(\mathcal A) \leq m(\mathcal A)$.
\end{Lemma}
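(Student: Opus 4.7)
The plan is to derive this lemma directly from Theorem \ref{ldm}, which is the whole point of including it in the ``comparison'' subsection: the new bound subsumes the old one. Theorem \ref{ldm} gives
$$ l(\A) \leq \max\!\left\{m(\A)-1,\ \tfrac{\dim\A}{2}\right\}, $$
and since $m(\A)-1 < m(\A)$ always, it suffices to check $\dim\A/2 \leq m(\A)$, that is, $\dim\A \leq 2m(\A)$.

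First I would dispose of the case $m(\A) \geq 4$. Here the inequality $m(\A)+4 \leq 2m(\A)$ is equivalent to $m(\A) \geq 4$ and holds by hypothesis, so combining it with $\dim\A \leq m(\A)+4$ yields $\dim\A \leq 2m(\A)$ at once, and Theorem \ref{ldm} immediately gives $l(\A) \leq m(\A)$.

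The only remaining case is $m(\A)=3$, where the bound $\dim\A \leq m(\A)+4 = 7$ does not force $\dim\A \leq 6$, so a purely dimensional argument fails. Here I would invoke integrality of $l(\A)$: Theorem \ref{ldm} yields $l(\A) \leq \max\{2, 7/2\} = 7/2$, and since $l(\A) \in \ZZ_{\geq 0}$ this forces $l(\A) \leq 3 = m(\A)$.

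The proof is essentially a case split with no real obstacle; the only mildly delicate point is the edge case $m(\A)=3$, which cannot be handled by the dimensional inequality $\dim\A \leq 2m(\A)$ alone and instead needs the integer-valuedness of the length function. Assembling the two cases completes the argument.
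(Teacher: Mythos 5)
Your argument is logically sound, but it is worth being clear that the paper does not prove this statement at all: Lemma \ref{d<m+4} is imported verbatim from \cite[Lemma 2.10]{KhMar20POMI}, where it has its own independent proof, and it appears here only as a benchmark against which Theorem \ref{ldm} is compared. What you have done is re-derive the old lemma as a corollary of the new Theorem \ref{ldm}. This is not circular, since Theorem \ref{ldm} is proved from scratch in Section \ref{genbound} without reference to Lemma \ref{d<m+4}, and the arithmetic is correct in both branches: for $m(\A)\geq 4$ one has $\dim\A\leq m(\A)+4\leq 2m(\A)$, so $\max\{m(\A)-1,\dim\A/2\}\leq m(\A)$; for $m(\A)=3$ the bound $\max\{2,7/2\}=7/2$ together with $l(\A)\in\ZZ_{\geq 0}$ gives $l(\A)\leq 3$. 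Your computation is essentially the same one the paper performs in its comparison subsection, only run in the opposite direction --- the paper shows the new bound can exceed $m(\A)$ only if $\dim\A\geq 2m(\A)+2$, which is incompatible with $\dim\A\leq m(\A)+4$ and $m(\A)\geq 3$ --- so your proposal buys a self-contained justification of the quoted lemma within this paper, at the cost of not reflecting how the result was originally established in \cite{KhMar20POMI}.
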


Since $m(\A)-1$ is unequivocally less than $m(\A)$, we see that the new estimate will be worse than the estimate from Lemma \ref{d<m+4} only if $\dfrac{\dim\A}{2} \geq m(\A)+1$ (that is, if $\dim\A \geq 2m(\A)+2$). Also, by the condition of Lemma \ref{d<m+4} it must be fulfilled that $\dim\mathcal A\leq m(\mathcal A)+4$. From the last two inequalities, it follows that $m(\A) \leq 2$. But in the condition of Lemma \ref{d<m+4} it is also required that $m({\mathcal A}) \geq 3$. Therefore, the new bound is better in any case.

Next we will compare with the following Pappacena's estimate.

\begin{Theorem}[{\cite[Theorem 3.1]{Pap}}]\label{Pap}
Let $\A$ be any algebra. Then
$ l(\A)< f(\dim \A,m(\A))$, where
$$f(d,m)=m\sqrt{\frac{2d}{m-1}+\frac{1}{4}}+\frac{m}{2}-2.$$
\end{Theorem}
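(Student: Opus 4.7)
The plan is to analyze the filtration $\L_0(\SS) \subseteq \L_1(\SS) \subseteq \cdots \subseteq \L_L(\SS) = \A$ induced by a generating set $\SS$ realizing $l(\SS) = L := l(\A)$, and to extract a quadratic lower bound on $\dim \A$ in terms of $L$ and $m := m(\A)$. Write $b_k := \dim \L_k(\SS) - \dim \L_{k-1}(\SS)$; by the definition of length each $b_k \geq 1$ for $1 \leq k \leq L$, and $\sum_{k=1}^{L} b_k = \dim \A - 1$. The strategy is to show that $b_k$ cannot remain small for too many consecutive indices $k$, and then sum the resulting lower bounds.

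The central technical step is a rigidity lemma controlling the length of constant runs of the sequence $(b_k)$: if $b_k = b_{k+1} = \cdots = b_{k+m-1} > 0$ for some $k \leq L - m + 1$, a contradiction with $m(\A) = m$ arises. To prove it, I would select irreducible representatives whose residues form bases of each quotient $\L_{k+j}(\SS)/\L_{k+j-1}(\SS)$ for $0 \leq j \leq m-1$. Using the fact that each such quotient is spanned by the images of the previous one under right-multiplication by a judiciously chosen linear combination $a$ of generators, and using the equality of dimensions, these surjections are promoted to linear isomorphisms. Composing $m$ such maps yields an element whose iterated action reproduces multiplication by $a^m$ on a nontrivial subspace in a way that produces a polynomial relation in $a$ of degree strictly less than $m$ on a cyclic submodule of dimension $m$, contradicting $m(\A) = m$. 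Producing this element with the correct choice of $a$, and tracking which words remain irreducible as right multiplications are composed, is where the bulk of the technical work lies.

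Granted the rigidity lemma, the sequence $(b_k)$ is bounded below by a non-increasing step function spending at most $m-1$ indices at each positive integer level. Parametrising $L = q(m-1) + r$ with $0 \leq r < m-1$ and minimising $\sum_k b_k$ over all feasible sequences yields an inequality of the form $\dim \A - 1 \geq (m-1)q(q+1)/2 + (q+1)r$. Rearranging this produces a quadratic inequality in $L$ which, after an application of the quadratic formula and elementary simplification, yields
$$L < m\sqrt{\frac{2\dim \A}{m-1} + \frac{1}{4}} + \frac{m}{2} - 2 = f(\dim \A, m(\A)).$$
The principal obstacle is the rigidity lemma: converting the purely numerical hypothesis ``$m$ consecutive equal differences'' into a genuine polynomial relation of forbidden degree on an element of $\A$, while carefully tracking irreducibility under the composed right-multiplication maps.
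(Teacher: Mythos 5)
This statement is quoted from Pappacena's paper \cite{Pap} and is not proved in the article itself, so there is no internal proof to measure you against; at the level of architecture your outline does follow Pappacena's actual strategy (study the increments $b_k=\dim\L_k(\SS)-\dim\L_{k-1}(\SS)$, prove a lemma forbidding the increments from staying small for too long, then sum). As a proof, however, the proposal has a genuine gap at its centre. The inference from your rigidity lemma (``no $m$ consecutive equal positive increments'') to the minorant ``$(b_k)$ is bounded below by a non-increasing step function spending at most $m-1$ indices at each level'' is invalid, because $(b_k)$ need not be monotone: already for $m=2$ the sequence $1,2,1,2,\dots$ contains no constant run of length $2$, yet it spends arbitrarily many indices at level $1$ and its sum grows only linearly in $L$, so no quadratic lower bound on $\dim\A$ --- and hence no sublinear upper bound on $L$ --- follows from the no-long-runs statement alone. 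What is needed (and what Pappacena in fact establishes) is a lemma of termination type: once some increment $b_i$ is at most $r$, the filtration must stabilise within roughly $r(m-1)$ further steps. That is what bounds the number of indices at which $b_k\le r$ and produces the quadratic count; it does not follow from forbidding constant runs.

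The rigidity lemma itself is also the entire difficulty, and it is only gestured at. The paper's own Theorem \ref{ldm} carries out precisely the case $r=1$ of such a statement (a single increment equal to $1$ before the last step forces $l(\A)<m(\A)$), and even there the argument is a delicate word-replacement induction; for $r\ge 2$ your step ``promote the surjections between consecutive quotients to isomorphisms given by right multiplication by a single well-chosen element $a$'' is not available in general, since each quotient is spanned by the images of the previous one under \emph{all} generators, and extracting one linear combination that realises an isomorphism requires an additional argument (e.g.\ an infinite field for a density argument, or a different device altogether). Finally, a consistency check: the inequality $\dim\A-1\ge (m-1)q(q+1)/2+(q+1)r$ you write down would yield a bound of order $\sqrt{2(m-1)\dim\A}$, which is strictly \emph{stronger} than $f(\dim\A,m)$, whose leading term is $m\sqrt{2\dim\A/(m-1)}$; since the intermediate inequality is exactly what has not been proved, this mismatch is a further sign that the claimed rigidity lemma is stronger than what the sketched argument can deliver. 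Either reproduce Pappacena's stabilisation lemma in full or treat the theorem as the paper does, namely as a quoted external result.
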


Since $\dim\A \geq m(\A)-1$, we have $m\sqrt{\dfrac{2d}{m-1}+\dfrac{1}{4}}+\dfrac{m}{2}-2 \geq m\sqrt{\dfrac{9}{4}}+\dfrac{m}{2}-2 = 2m-2.$ Since $m(\A)-1$ is less than $2m(\A)-2$, we see that the new estimate will be worse than Pappacena's estimate only if $\dfrac{\dim\A}{2} > 2m(\A)-2$ (that is, if $\dim\A > 4(m(\A)-1)$). That is, the new bound can be worse than Pappacena's bound only if the dimension of the algebra is 4 times greater than the expression $m(\A)-1$. In particular, the new estimate is unequivocally better when considering group algebras of dihedral groups, which will be discussed in the next section. However, Theorem \ref{ldm} may give a more accurate estimate than Theorem \ref{Pap} even if $\dim\A \leq 4(m(\A)-1)$. Let us show that by the following example.

\begin{Example}
Let $\A = M_3(\mathbb F)$. Then $\dim \A = 9$, $m(\A)=3$. Theorem \ref{Pap} gives an estimate $l(\A) \leq 8$. Theorem \ref{ldm} gives an estimate $l(\A) \leq 4$, which corresponds to the value $l(M_3(\mathbb F))$ in Paz's conjecture.
\end{Example}

\section{Calculating $l(\F\D_n$)}\label{lendih}

\subsection{Bicirculant Algebra}\

Let us consider two matrices. The circulant $A_n=E_{n,1}+E_{1,2}+\cdots+E_{n-1,n}$ and the anti-circulant $B_n=E_{1,n}+\cdots +E_{n,1}$. 
$$
A_n=
\begin{pmatrix}
0 & 1 & 0 &\ldots & 0\\
0 & 0 & 1 &\ldots & 0\\
0 & 0 & 0 &\ldots & 0\\
\vdots& \vdots & \vdots &\ddots & \vdots\\
0 & 0 & 0 &\ldots & 1\\
1 & 0 & 0 &\ldots & 0
\end{pmatrix}
,\quad
B_n=
\begin{pmatrix}
0 & 0 &\ldots & 0 & 1\\
0 & 0 &\ldots & 1 & 0\\
\vdots& \vdots & \ddots &\vdots & \vdots\\
0 & 0 &\ldots & 0 & 0\\
0 & 1 &\ldots & 0 & 0\\
1 & 0 &\ldots & 0 & 0
\end{pmatrix}.
$$

Let us define the algebra generated by these two matrices.

\begin{Definition} {\em The algebra of bicirculants of order n} over the field $\F$ is $\C_n(\F)=\L(\{A_n,B_n\})$.
\end{Definition}

Let us study the structure of this algebra.

\begin{Lemma}\label{bcrel}
 $A_n^n=E$, $B_n^2=E$, $A_nB_n=B_nA_n^{n-1}$.
\end{Lemma}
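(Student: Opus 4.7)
The plan is to identify $A_n$ and $B_n$ by the permutations they induce on the standard basis $e_1,\ldots,e_n$ of $\F^n$ and then check each of the three relations by comparing their actions on basis vectors. Reading the defining formulas off column by column, one sees that $A_n e_j = e_{j-1}$ for $2 \le j \le n$ and $A_n e_1 = e_n$, while $B_n e_j = e_{n+1-j}$ for every $j$. In other words, $A_n$ realizes the cyclic shift $j \mapsto j-1 \pmod n$ on representatives in $\{1,\ldots,n\}$, and $B_n$ is the reversal involution $j \mapsto n+1-j$.

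From this reading the first two relations are immediate: iterating the cyclic shift $n$ times returns every index to itself, so $A_n^n e_j = e_j$ for each $j$ and hence $A_n^n = E$; similarly the reversal is its own inverse, giving $B_n^2 = E$. In particular $A_n^{n-1} = A_n^{-1}$ is the inverse cyclic shift, sending $e_j$ to $e_{j+1}$ for $j \le n-1$ and $e_n$ to $e_1$.

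For the twisted commutation relation I would compute $A_n B_n e_j$ and $B_n A_n^{n-1} e_j$ separately and observe that they coincide. A direct check shows that $A_n B_n e_j = A_n e_{n+1-j}$ equals $e_{n-j}$ when $j \le n-1$ and equals $e_n$ when $j = n$, and likewise $B_n A_n^{-1} e_j = B_n e_{j+1 \pmod n}$ evaluates to the same two values. Since the two operators agree on a basis of $\F^n$, they coincide as matrices, establishing $A_n B_n = B_n A_n^{n-1}$.

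The only mildly subtle point is keeping careful track of the cyclic wrap-around at the boundary indices $j=1$ and $j=n$; treating indices as residues modulo $n$ with representatives in $\{1,\ldots,n\}$ makes the case analysis transparent. No deeper obstacle arises, since the lemma simply records that on the coordinate basis $A_n$ and $B_n$ enact the standard presentation of the dihedral group of order $2n$.
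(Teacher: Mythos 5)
Your proof is correct and follows essentially the same route as the paper, which simply states that the equalities are checked directly by multiplying matrices; your verification via the permutation action on the basis vectors is just that direct computation carried out explicitly. All index bookkeeping ($A_n e_j = e_{j-1}$ cyclically, $B_n e_j = e_{n+1-j}$) checks out.
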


\begin{proof} The equalities are checked directly by multiplying matrices.
\end{proof}

\begin{Lemma}\label{bcdim}
$\dim \C_n(\F)=\begin{cases} 2n-2,\ \mbox{for even}\; n;\\
2n-1, \ \mbox{for odd}\; n.
\end{cases}$
\end{Lemma}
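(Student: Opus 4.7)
The plan is to first use the relations of Lemma \ref{bcrel} ($A_n^n = E$, $B_n^2 = E$, and $A_n B_n = B_n A_n^{n-1}$) to put every word in the alphabet $\{A_n, B_n\}$ into one of the $2n$ normal forms $A_n^i$ or $B_n A_n^i$ with $0 \le i \le n-1$. This shows immediately that $\C_n(\F)$ is spanned by these $2n$ matrices, giving the upper bound $\dim \C_n(\F) \le 2n$.

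Next, I would verify that the $n$ circulants $A_n^k$ are linearly independent over $\F$: a direct computation shows $A_n^k$ has a $1$ at position $(i,j)$ precisely when $j \equiv i + k \pmod n$, so the supports are pairwise disjoint. An analogous argument works for $B_n A_n^k$, whose $1$'s sit at positions with $i + j \equiv k + 1 \pmod n$. Let $U$ and $V$ be the resulting $n$-dimensional subspaces (circulants and anti-circulants); then $\C_n(\F) = U + V$, and by the dimension formula the problem reduces to computing $\dim(U \cap V)$.

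The main computation is the intersection. A matrix $M$ lies in $U \cap V$ if and only if there exist functions $f, g : \ZZ/n \to \F$ with $M_{ij} = f(j - i \bmod n) = g(i + j \bmod n)$ for all $i,j$. Fixing the sum $s = i+j \pmod n$ and letting $i$ vary, the difference $j - i = s - 2i$ runs over the coset $s + 2\ZZ/n \subseteq \ZZ/n$. For odd $n$, $2$ is invertible mod $n$, so this coset equals all of $\ZZ/n$; hence $f$ (and therefore $g$) must be constant, and $U \cap V$ is spanned by the all-ones matrix $J$, giving $\dim(U \cap V) = 1$. For even $n$, the coset consists exactly of residues having the same parity as $s$, so $f$ may take one value on even residues and another on odd residues independently, yielding $\dim(U \cap V) = 2$.

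Combining these via $\dim \C_n(\F) = \dim U + \dim V - \dim(U \cap V) = 2n - \dim(U \cap V)$ delivers the claimed dimensions. The principal obstacle is the parity bookkeeping in the even case, in particular confirming that both parity classes contribute a genuinely independent element; note that no restriction on $\chr \F$ arises, since the argument never requires inverting $2$.
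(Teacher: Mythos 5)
Your proof is correct and follows essentially the same route as the paper: write $\C_n(\F)$ as the sum of the $n$-dimensional spaces of circulants and anti-circulants and apply the dimension formula for a sum of subspaces. The only difference is one of detail --- the paper simply exhibits the basis of the intersection (the all-ones matrix for odd $n$, the two parity-checkerboard matrices for even $n$) and says the verification is direct, whereas you actually derive $\dim(U\cap V)$ via the coset $s+2\ZZ/n$ argument, which is a welcome elaboration rather than a different method.
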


\begin{proof} Due to Lemma \ref{bcrel} we may consider that $\C_n(\F)=\C_n'(\F)+\C_n''(\F)$, where $\C_n'(\F)=\langle E,A_n,A_n^2,\dots,A_n^{n-1}\rangle$, $\C_n''(\F)=\langle B_n,B_nA_n,B_nA_n^2,\dots,B_nA_n^{n-1}\rangle$. Note that $\C_n'(\F)$ is nothing else but the space of circulants, and $\C_n''(\F)$ is the space of anti-circulants, each of which has a dimension of $n$.

The basis of the intersection of the spaces $\C_n'(\F)$ and $\C_n''(\F)$ in the odd case is the matrix in which each element equals 1, and in the even case, the basis will be the following two matrices

$$
\begin{pmatrix}
1 & 0 & 1 &\ldots & 0\\
0 & 1 & 0 &\ldots & 1\\
1 & 0 & 1 &\ldots & 0\\
\vdots& \vdots & \vdots &\ddots & \vdots\\
1 & 0 & 1 &\ldots & 0\\
0 & 1 & 0 &\ldots & 1
\end{pmatrix}
 \ \mbox{and }
\begin{pmatrix}
0 & 1 & 0 &\ldots & 1\\
1 & 0 & 1 &\ldots & 0\\
0 & 1 & 0 &\ldots & 1\\
\vdots& \vdots & \vdots &\ddots & \vdots\\
0 & 1 & 0 &\ldots & 1\\
1 & 0 & 1 &\ldots & 0
\end{pmatrix}.
$$

Thus, the statement of the lemma follows from the formula for the dimension of the sum of subspaces.
\end{proof}

\begin{Theorem}\label{bclen}
$l(\C_n(\F))=n-1.$
\end{Theorem}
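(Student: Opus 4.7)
The plan is to prove matching upper and lower bounds. The upper bound $l(\C_n(\F)) \leq n - 1$ will be immediate from Theorem \ref{ldm}: every element of $\C_n(\F)$ is an $n \times n$ matrix, hence has minimal polynomial of degree at most $n$, so $m(\C_n(\F)) \leq n$; combined with Lemma \ref{bcdim}, which gives $\dim \C_n(\F) \leq 2n - 1$, Theorem \ref{ldm} yields $l(\C_n(\F)) \leq \max\{n - 1, (2n - 1)/2\}$, and since $l$ is an integer this forces $l(\C_n(\F)) \leq n - 1$.

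For the lower bound I would exhibit the generating set $\SS = \{B_n, B_n A_n\}$, consisting of the two ``reflections''. By Lemma \ref{bcrel}, $B_n \cdot (B_n A_n) = A_n \in \L_2(\SS)$, so $\SS$ generates $\C_n(\F)$. Setting $X = B_n$ and $Y = B_n A_n$, Lemma \ref{bcrel} also yields $X^2 = Y^2 = E$, $XY = A_n$, and $YX = A_n^{-1}$. Since $X$ and $Y$ are involutions, every word of length $k$ in $\{X, Y\}$ evaluates in $\C_n(\F)$ to an alternating word of the same parity and length at most $k$; the relevant alternating words are $(XY)^m = A_n^m$ and $(YX)^m = A_n^{-m}$ of even length $2m$, and $(XY)^m X = B_n A_n^{-m}$ and $(YX)^m Y = B_n A_n^{m+1}$ of odd length $2m + 1$. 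Consequently $\L_k(\SS)$ is the span of the circulants $A_n^{\pm m}$ with $2m \leq k$ together with the anti-circulants $B_n A_n^{-m}$ and $B_n A_n^{m+1}$ with $2m + 1 \leq k$.

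The crux will be to show $\dim \L_{n-2}(\SS) < \dim \C_n(\F)$. Enumerating the exponents above, the alternating words of length at most $n - 2$ produce all circulant basis vectors $A_n^j$ except $A_n^{(n-1)/2}, A_n^{(n+1)/2}$ (when $n$ is odd) or $A_n^{n/2}$ (when $n$ is even), and all anti-circulant basis vectors $B_n A_n^j$ except $B_n A_n^{(n+1)/2}$ (odd $n$) or $B_n A_n^{n/2}, B_n A_n^{n/2 + 1}$ (even $n$). The main obstacle will be to check that the elements of the circulant--anti-circulant intersection in $\C_n(\F)$, described by Lemma \ref{bcdim}, do not fill in these omissions: for odd $n$ the one-dimensional intersection is spanned by $J = \sum_j A_n^j = \sum_j B_n A_n^j$, whose two representations each involve an omitted basis vector so that nothing is recovered; for even $n$ exactly one of the two intersection generators $M_1, M_2$ is realized in $\L_{n-2}(\SS)$ through its circulant form (the choice depending on the parity of $n/2$), thereby supplying a single additional anti-circulant basis vector. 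A direct count then gives $\dim \L_{n-2}(\SS) = 2n - 3 < \dim \C_n(\F)$ in every case, so $\L_{n-2}(\SS) \subsetneq \C_n(\F)$ and $l(\SS) \geq n - 1$, matching the upper bound and yielding $l(\C_n(\F)) = n - 1$.
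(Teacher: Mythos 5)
Your proof is correct and follows essentially the same route as the paper: the upper bound via Theorem \ref{ldm} with $m(\C_n(\F))\leq n$ and $\dim\C_n(\F)\leq 2n-1$, and the lower bound via the two-involution generating set $\{B_n, B_nA_n\}$ (the paper uses $\{B_n, A_nB_n\}$), whose alternating-word structure caps the dimension of $\L_{n-2}(\SS)$ at $2n-3<\dim\C_n(\F)$. The only difference is that you enumerate exactly which circulant and anti-circulant basis vectors arise and analyze the intersection subspace explicitly, whereas the paper gets the same bound more quickly by simply noting there are at most two irreducible words of each positive length.
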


\begin{proof} Let us first prove the lower bound $l(\C_n(\F))\geq n-1.$ Consider a generating set $\SS=\{u,v\}$, where $u=B_n, v=A_nB_n$. This is indeed a generating set, as $\C_n(\F)=\L(\{A_n,B_n\})=\L(\{vu,u\})\subseteq \L(\{u,v\})=\L(\{B_n,A_nB_n\})\subseteq \L(\{A_n,B_n\})=\C_n(\F)$. At the same time, $u^2=v^2=E$, meaning that there are no more than two irreducible words of each length (of the form $uvuv\dots$ and $vuvu\dots$). Thus, $\dim\L_{n-2}(\SS)=(\dim\L_{n-2}(\SS)-\dim\L_{n-3}(\SS))+(\dim\L_{n-3}(\SS)-\dim\L_{n-4}(\SS))+\cdots+(\dim\L_1(\SS)-\dim\L_0(\SS))+\dim\L_0(\SS)\leq 2(n-2)+1<\dim\C_n(\F)$, from which it follows that the length of the algebra is at least $n-1$.

The upper bound $l(\C_n(\F))\leq n-1$ follows from Theorem \ref{ldm}. Indeed, by the Cayley-Hamilton theorem, $m(\C_n(\F))\leq n$. By Lemma \ref{bcdim}, $\dim \C_n(\F)\leq 2n-1$.  Applying Theorem \ref{ldm}, we obtain the inequality $l(\C_n(\F)) \leq max\{n-1,\frac{2n-1}{2}\}$. This completes the proof.

\end{proof}

\subsection{Bicirculant Representation of $\F\D_n$}\label{bcsect}\

Let us number the vertices of a regular $n$-gon. Let $d\in \D_n$ map the vertex $i$ to the vertex $\sigma(i)$ $\forall i$, where $\sigma\in S_n$. Then we can consider a group homomorphism, defining its values on elements of $\D_n$ by the rule $f(d)=\sigma$, and then extend it to an algebra homomorphism $f:\F\D_n\rightarrow \F S_n$ by linearity.

Let us now consider a group homomorphism $g: S_n \rightarrow M_n(\{0,1\})$, which maps a permutation from $S_n$ into the corresponding permutation matrix. We extend it to an algebra homomorphism $g: \F S_n \rightarrow M_n(\F)$ by linearity.

Note that the composition $g\circ f$ defines a linear representation of the algebra $\F\D_n$. This representation is called the {\em bicirculant representation} in this paper. Let us study some properties of this composition.

\begin{Lemma}\label{imgf}
$\Im g\circ f = \C_n(\F)$.
\end{Lemma}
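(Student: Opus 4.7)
The strategy is to exhibit a labeling of the vertices of the $n$-gon under which the two standard generators of $\D_n$ are sent by $g \circ f$ precisely to the matrices $A_n$ and $B_n$. Once this identification is made, the lemma follows immediately: the image of an algebra homomorphism is generated (as an algebra) by the images of any generating set of the domain, and $\F\D_n$ is generated as an $\F$-algebra by any pair consisting of a rotation of order $n$ and a reflection.

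Concretely, I would fix a presentation $\D_n = \langle r, s \mid r^n = s^2 = 1,\ srs = r^{-1}\rangle$ and label the vertices $1, 2, \ldots, n$ in cyclic order so that $r$ acts on vertices by $j \mapsto j-1 \pmod{n}$ and $s$ acts by $j \mapsto n+1-j$. A direct inspection of the resulting permutation matrices then shows that $(g\circ f)(r) = A_n$ and $(g\circ f)(s) = B_n$. Consistency of this identification is underwritten by Lemma \ref{bcrel}: the matrix identities $A_n^n = B_n^2 = E$ and $B_n A_n B_n = A_n^{n-1}$ are exactly the images of the defining relations of $\D_n$, so no contradiction arises when extending $r \mapsto A_n$, $s \mapsto B_n$ to an algebra homomorphism.

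With $(g\circ f)(r) = A_n$ and $(g\circ f)(s) = B_n$ established, the conclusion is routine. Every element of $\F\D_n$ is an $\F$-linear combination of group elements, each of which is a word in $r$ and $s$ (no inverses are needed, thanks to $r^n = s^2 = 1$), so every element of $\Im(g\circ f)$ is an $\F$-linear combination of words in $A_n$ and $B_n$, yielding $\Im(g\circ f) \subseteq \L(\{A_n,B_n\}) = \C_n(\F)$; the reverse inclusion is immediate since $A_n$ and $B_n$ themselves lie in the image. The only subtlety worth flagging is the choice of labeling: one must pick the reflection so that its permutation matrix is exactly the anti-circulant $B_n$, which is why I single out $j \mapsto n+1-j$ rather than an arbitrary reflection.
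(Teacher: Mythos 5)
Your proof is correct and follows essentially the same route as the paper: both identify a rotation and a suitably chosen reflection whose images under $g\circ f$ are exactly $A_n$ and $B_n$, and then conclude that the image is the algebra generated by these two matrices, i.e.\ $\C_n(\F)$. The remark about checking the defining relations against Lemma \ref{bcrel} is harmless but unnecessary, since $g\circ f$ is already defined as a composition of honest homomorphisms rather than by assigning images to generators.
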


\begin{proof}
Let $a$ be the rotation by an angle $\frac{2\pi}{n}$, $b$ be the symmetry about the axis passing through the vertex $\left[\dfrac{n}{2}\right]+1$. Then $\F \D_n=\langle e, a, a^2, \dots, a^{n-1}, b, ba, \dots , ba^{n-1} \rangle$.

It is easy to notice that $g\circ f(a)=A_n$, $g\circ f(b)=B_n$. Since $g\circ f$ is a homomorphism, $g\circ f(b^ia^j)=B_n^iA_n^j$, from which the statement of the lemma follows.
\end{proof}

\begin{Lemma}\label{kergf}
$\ker g\circ f = \langle e+a+\cdots+a^{n-1}-b-ba-\cdots-ba^{n-1} \rangle$, for odd n. $\ker g\circ f = \langle e+a^2+\cdots+a^{n-2}-b-ba^2-\cdots-ba^{n-2}, a+a^3+\cdots+a^{n-1}-ba-ba^3-\cdots-ba^{n-1}\rangle$, for even n.
\end{Lemma}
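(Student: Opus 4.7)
The plan is to use the rank--nullity theorem together with Lemmas \ref{imgf} and \ref{bcdim} to pin down the dimension of $\ker(g\circ f)$, and then verify directly that each listed element lies in this kernel.

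First I would note that $\dim \F\D_n = 2n$, and by Lemma \ref{imgf} we have $\Im(g\circ f) = \C_n(\F)$, whose dimension is computed in Lemma \ref{bcdim}. Hence the rank--nullity theorem gives
$$\dim \ker(g\circ f) = \begin{cases} 1, & n \text{ odd}, \\ 2, & n \text{ even}. \end{cases}$$

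Next I would verify that each stated element lies in $\ker(g\circ f)$. Since $g\circ f$ is an algebra homomorphism with $g\circ f(a^i) = A_n^i$ and $g\circ f(ba^i) = B_n A_n^i$, the image of each listed element is a signed sum of matrices of the form $A_n^i$ or $B_n A_n^i$. For odd $n$ the computation is clean: $\sum_{i=0}^{n-1} A_n^i = J$, the all-ones matrix, since $A_n$ is the cyclic shift, and $B_n J = J$ because $B_n$ is a permutation matrix. Thus the image of $e + a + \cdots + a^{n-1} - b - ba - \cdots - ba^{n-1}$ is $J - B_n J = 0$. For even $n$, I would split each of the two listed elements into its circulant and anti-circulant parts and check entrywise (modulo $n$) that the relevant partial sum of the matrices $A_n^i$ with even (resp.\ odd) $i$ coincides with the corresponding partial sum of the matrices $B_n A_n^i$.

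For even $n$ I would also confirm that the two listed generators are $\F$-linearly independent; this is immediate from inspection of their supports in the standard basis $\{e,a,\dots,a^{n-1},b,ba,\dots,ba^{n-1}\}$ of $\F\D_n$. Combined with the dimension count from rank--nullity, this identifies their $\F$-linear span with the full kernel of $g\circ f$.

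The main technical obstacle is the direct verification in the even case: one must track indices modulo $n$ carefully in order to see that the circulant and anti-circulant partial sums match up as claimed. The odd case collapses to the elegant identity $B_n J = J$, but the even case requires splitting into parity classes and checking that the resulting $0/1$ patterns of $\sum_{i} A_n^i$ and $\sum_i B_n A_n^i$ agree on each entry.
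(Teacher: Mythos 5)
Your proposal is correct and follows essentially the same route as the paper: the paper likewise obtains the kernel's dimension from Lemmas \ref{bcdim} and \ref{imgf} (i.e., rank--nullity) and then checks directly that the listed elements lie in the kernel and, for even $n$, are linearly independent. Your odd-case simplification via $J - B_nJ = 0$ is a clean way to carry out that direct check.
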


\begin{proof} The dimension of the kernel is established using Lemmas \ref{bcdim} and \ref{imgf}. The fact that the specified elements lie in the kernel and are linearly independent (in the case of even $n$) is checked directly.
\end{proof}

\subsection{Length of $\F\D_n$}\

First, let us present known results about the length of $\F\D_n$.

\begin{Lemma}[{\cite[Lemma 2.1]{KhMar20}}]\label{Died}
 Let ${\cal D}_{n}$ be the dihedral group of order $2n$, $n\geq3$, $\F$ be an arbitrary field. Then $l(\F{\cal D}_{n})\geq n$.

\end{Lemma}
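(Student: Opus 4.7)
The plan is to exhibit a generating set $\SS$ of $\F\D_n$ for which $\L_{n-1}(\SS)\neq \F\D_n$. Since $\dim \F\D_n = 2n$ and distinct group elements are linearly independent in the group algebra, it suffices to show that fewer than $2n$ elements of $\D_n$ can be realized as words of length at most $n-1$ in $\SS$. The natural device is to take two involutions as generators: every word then collapses to an alternating word, capping the number of new group elements produced at each length by two.

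Concretely, I would take $\SS=\{u,v\}$ with $u=b$ and $v=ab$, where $a$ is a rotation of order $n$ and $b$ is a reflection satisfying $bab=a^{-1}$. Then $u^2=b^2=e$ and $v^2=abab=a(bab)=aa^{-1}=e$, so both generators are involutions, while $vu=ab\cdot b=a$ recovers the rotation; hence $\SS$ generates $\F\D_n$. The alternating words then evaluate in $\D_n$ as $(uv)^j=a^{-j}$, $(vu)^j=a^j$, $(uv)^j u = a^{-j}b$, and $(vu)^j v = a^{j+1}b$.

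Letting the length range over $0,1,\dots,n-1$, these formulas enumerate every element of $\L_{n-1}(\SS)$ and yield at most $1+2(n-1)=2n-1$ group elements. Hence $\dim\L_{n-1}(\SS)\leq 2n-1<2n$, so $l(\SS)\geq n$ and therefore $l(\F\D_n)\geq n$. The delicate point is to confirm that no additional coincidences shrink the count further within this window; using the explicit formulas one checks that the first genuine collision occurs only at length exactly $n$ (for odd $n$ the reflections $a^{-(n-1)/2}b$ and $a^{(n+1)/2}b$ coincide, while for even $n$ the rotations $(uv)^{n/2}$ and $(vu)^{n/2}$ both equal $a^{n/2}$), so precisely one element of $\D_n$ is missing from $\L_{n-1}(\SS)$, and the argument works uniformly for all $n\geq 3$.
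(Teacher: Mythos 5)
Your argument is correct and is essentially the approach the paper relies on: the lemma itself is cited from \cite{KhMar20} without a reproduced proof, but your two-involution counting device (take $u=b$, $v=ab$, note $u^2=v^2=e$, so at most two irreducible words per length, giving $\dim\L_{n-1}(\SS)\leq 1+2(n-1)=2n-1<2n=\dim\F\D_n$) is exactly the technique the paper uses for the lower bound in Theorem \ref{bclen} with $u=B_n$, $v=A_nB_n$. Your closing paragraph about locating the first collision is superfluous for the lower bound, since the crude count $2n-1<2n$ already suffices.
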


\begin{Theorem}[{\cite[Theorem 1.15]{KhMar20}}]\label{Died_n} Let $\F$ be a field such that $\chr \F$ does not divide $2n$. Then $l(\F {\cal D}_n)=n$, for $n\geq 3$.
\end{Theorem}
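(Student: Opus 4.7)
The equality $l(\F\D_n)=n$ splits into two inequalities. The lower bound $l(\F\D_n)\ge n$ is Lemma~\ref{Died}, which requires no semisimplicity, so I focus on the upper bound under the hypothesis $\chr\F\nmid 2n$. Maschke's theorem then makes $\F\D_n$ semisimple, and since every irreducible $\D_n$-representation has degree at most~$2$, the Wedderburn decomposition over a splitting field reads
\[
\F\D_n\;\cong\;\F^{\varepsilon}\oplus M_2(\F)^{(n-\varepsilon)/2},
\]
with $\varepsilon=2$ for odd $n$ and $\varepsilon=4$ for even $n$ (over a non-splitting field the same shape holds after replacing $M_2(\F)$ blocks by central simple $\F$-algebras of equal $\F$-dimension). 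Consequently $\dim\F\D_n=2n$, and the lcm bound on the minimal polynomials of the components gives $m(\F\D_n)\le n+1$ for odd $n$ and $m(\F\D_n)\le n+2$ for even $n$.

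For \emph{odd $n$}, Theorem~\ref{ldm} immediately yields $l(\F\D_n)\le\max\{n,n\}=n$, matching the lower bound.

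For \emph{even $n$}, Theorem~\ref{ldm} alone gives only $l(\F\D_n)\le n+1$, so the extra unit has to be shaved off by a structural argument. My plan is to combine Theorem~\ref{bclen} with the bicirculant decomposition: by Lemmas~\ref{imgf}--\ref{kergf} and semisimplicity, $\F\D_n=\C_n(\F)\oplus K$ with $\dim K=2$, and $K$ is spanned by the central idempotents of the two characters $\chi_1,\chi_2:\D_n\to\{\pm1\}$ sending $b\mapsto -1$. For an arbitrary generating set $\SS$ of $\F\D_n$ the projection $\pi_{\C_n}(\SS)$ generates $\C_n(\F)$, so Theorem~\ref{bclen} forces $\pi_{\C_n}(\L_{n-1}(\SS))=\C_n(\F)$; likewise $K\cong\F^2$ has length~$1$, so $\pi_K(\L_1(\SS))=K$. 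What remains is to show that these two surjectivities together force $\L_n(\SS)=\F\D_n$: a length-$(n-1)$ word whose $\C_n$-projection hits a chosen basis element of $\C_n(\F)$ can be "corrected" by a length-$1$ element of $\SS$ with prescribed $K$-content, producing both basis vectors of $K$ inside $\L_n(\SS)$ itself.

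The main obstacle is precisely this even case. Theorem~\ref{ldm} falls short by one unit, and general direct-sum bounds such as $l(\A_1\oplus\A_2)\le l(\A_1)+l(\A_2)+1$ also yield $n+1$, not $n$. The improvement rests on the structural observation that $K$'s idempotents lie "over" the commutative subalgebra $\F\langle a\rangle\subseteq\F\D_n$, so they are producible from any generating set with only one word-length step beyond $l(\C_n(\F))=n-1$; this feature, specific to the dihedral setting and exploited in~\cite{KhMar20}, is what closes the gap.
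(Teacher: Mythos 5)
This theorem is not proved in the paper at all --- it is imported verbatim from \cite{KhMar20} --- so your attempt can only be judged on its own merits and against the machinery the paper does develop. Your odd case is correct and is essentially what the paper's own techniques deliver: semisimplicity gives $m(\F\D_n)\le n+1$, and Theorem \ref{ldm} then yields $l(\F\D_n)\le\max\{(n+1)-1,\,2n/2\}=n$, matching the lower bound of Lemma \ref{Died}. (Minor slip: the exponent in your Wedderburn decomposition should be $(2n-\varepsilon)/4$, i.e.\ $(n-1)/2$ resp.\ $(n-2)/2$ two-dimensional blocks, not $(n-\varepsilon)/2$; your subsequent degree count is nevertheless the right one.)

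The even case, however, contains a genuine gap --- and it is precisely the gap this paper itself cannot close, since Theorem \ref{lendn} and the Remark following it concede only ``$n$ or $n+1$'' for even $n$. Your argument reduces to the assertion that $\pi_{\C_n}(\L_{n-1}(\SS))=\C_n(\F)$ together with $\pi_K(\L_1(\SS))=K$ forces $\L_n(\SS)=\F\D_n$; this implication is never proved, and it does not follow formally from the two surjectivities. The natural implementation of your ``correction'' idea fails: choose $w\in\L_{n-1}(\SS)$ with $\pi_{\C_n}(w)=e_\C$ (the identity of the $\C_n$-summand), so $w=e_\C+\kappa$ with $\kappa\in K$; then for a generator $s$ one computes $ws-s=(\kappa-1_K)\,\pi_K(s)\in\L_n(\SS)\cap K$, but there is no reason why the elements $(\kappa-1_K)\pi_K(s)$, $s\in\SS$, should span the two-dimensional algebra $K$ --- the factor $\kappa-1_K$ need not be invertible in $K$, and $\pi_K(\SS)$ need not span $K$ (it only generates $K$ together with $1_K$). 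The general direct-sum bound you cite likewise stops at $n+1$. Closing this last unit genuinely requires the case analysis of generating sets carried out in \cite{KhMar20}; as written, your proposal proves the theorem only for odd $n$.
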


\begin{Theorem}[{\cite[Theorem 4.10]{KhMar20POMI}}]
Let $\chr \F = 2$, $k\geq 2$. Then $l(\F\D_{2^k})=2^k$.\label{lfd2k}
\end{Theorem}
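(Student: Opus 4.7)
The plan is to sandwich $l(\F\D_{2^k})$ between the known lower bound from Lemma \ref{Died} and the new upper bound from Theorem \ref{ldm}. Since $2^k \geq 4$, Lemma \ref{Died} immediately gives $l(\F\D_{2^k}) \geq 2^k$. For the matching upper bound, since $\dim \F\D_{2^k} = 2^{k+1}$ (so $\dim \F\D_{2^k}/2 = 2^k$), Theorem \ref{ldm} reduces the task to showing $m(\F\D_{2^k}) \leq 2^k + 1$.

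To bound $m(\F\D_{2^k})$, I would observe that $|\D_{2^k}| = 2^{k+1}$ is a power of $\chr \F = 2$, so $\F\D_{2^k}$ is a local algebra whose unique maximal ideal is the nilpotent augmentation ideal $I = \ker \epsilon$. Every element decomposes as $\tau = \epsilon(\tau) + y$ with $y \in I$, and its minimal polynomial is $(x - \epsilon(\tau))^m$, where $m$ equals the nilpotency index of $y$. Hence $m(\F\D_{2^k})$ is at most the Loewy length $L$ of $I$, i.e.\ the smallest $L$ with $I^L = 0$.

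To show $L \leq 2^k + 1$, I would invoke Jennings' theorem for $p$-group algebras in characteristic $p$. A straightforward computation of the Brauer--Jennings--Zassenhaus series of $\D_{2^k}$ (the commutator subgroup is $\langle a^2\rangle$, and subsequent terms iterate the maps $H \mapsto H^2$ and $H \mapsto [H, G]$) yields Jennings exponents $d_1 = 2$ and $d_{2^j} = 1$ for $1 \leq j \leq k-1$, with all other $d_i$ zero. Jennings' Loewy-length formula then gives $L = 1 + 1 \cdot 2 + \sum_{j=1}^{k-1} 2^j \cdot 1 = 2^k + 1$, and Theorem \ref{ldm} yields $l(\F\D_{2^k}) \leq \max\{L - 1, 2^k\} = 2^k$, matching the lower bound.

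The main obstacle is the Loewy length computation, as Jennings' theorem is external to the paper. A self-contained alternative is to verify $I^{2^k+1} = 0$ directly, either by an inductive normal-ordering argument in the generators $\alpha = a+1$ and $\beta = b+1$ (using $\alpha\beta + \beta\alpha \equiv 0 \pmod{I^3}$ to gather the $\beta$'s together and then invoking $\beta^2 = 0$), or via the free rank-$2$ module structure of $\F\D_{2^k}$ over its cyclic subalgebra $\F\langle a\rangle$, whose own radical has Loewy length exactly $2^k$. The bicirculant representation of Section \ref{bcsect} alone does not suffice here: its kernel is a $2$-dimensional ideal of square zero, so chasing $\mu_{\bar\tau}(\tau) \in \ker$ only yields $m(\F\D_{2^k}) \leq 2 \cdot 2^k$, off by a factor of two from what is needed.
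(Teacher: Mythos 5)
The paper does not prove this statement at all --- it is imported verbatim as Theorem 4.10 of \cite{KhMar20POMI} --- so there is no internal proof to compare against; I can only assess your argument on its own merits. Your main line of reasoning is correct and is, in fact, a genuinely different (and shorter) derivation than the one in the cited source: the sandwich $l(\F\D_{2^k})\geq 2^k$ from Lemma \ref{Died} together with $l(\F\D_{2^k})\leq\max\{m(\F\D_{2^k})-1,\ 2^k\}$ from Theorem \ref{ldm} reduces everything to $m(\F\D_{2^k})\leq 2^k+1$; the reduction of $m$ to the Loewy length of the augmentation ideal via locality of $\F G$ for a $2$-group in characteristic $2$ is sound; and your Jennings data ($d_1=2$, $d_{2^j}=1$ for $1\leq j\leq k-1$) and the resulting Loewy length $1+2+\sum_{j=1}^{k-1}2^j=2^k+1$ are correct. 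This nicely illustrates that Theorem \ref{ldm} subsumes the modular dihedral case, at the price of invoking Jennings' theorem, which is external both to this paper and to \cite{KhMar20POMI}. Your closing observation that the bicirculant representation cannot do the job here is also right, though the kernel being $2$-dimensional actually gives the sharper bound $m\leq 2^k+2$ (exactly Theorem \ref{mdn}), which still falls one short of what is needed.

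One concrete error in the proposed self-contained alternative: the congruence $\alpha\beta+\beta\alpha\equiv 0\pmod{I^3}$ is false. In characteristic $2$ one has $\alpha\beta+\beta\alpha=(a+a^{-1})b$, and $a+a^{-1}=(1+a)(1+a^{-1})\in I^2$ with $1+a^{-1}\equiv\alpha\pmod{I^2}$, so in fact $\alpha\beta+\beta\alpha\equiv\alpha^2\pmod{I^3}$, and $\alpha^2=1+a^2$ is nonzero modulo $I^3$ (it represents the weight-$2$ Jennings generator $a^2$). A normal-ordering argument can still be pushed through with this corrected commutation rule, since the error term $\alpha^2$ has the same Jennings weight as $\alpha\beta$, but as written that step would fail. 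Since this only affects the optional ``self-contained'' route and not your primary Jennings-based argument, the proof as a whole stands.
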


In this paper, we will try to generalize the last two theorems, namely, to eliminate the condition on the field.

Hereinafter in the work, it is assumed that $n\geq 3$.

Let us prove the main result of the section.

In the proof of the following lemma the author uses the idea of proving Lemma 3.11 from \cite{GutM18}.

\begin{Lemma}\label{sur}
Let there exist a surjective homomorphism of algebras $\varphi:\cal A \rightarrow \cal B$. Then $$l(\mathcal A) \leq l (\mathcal B)+\dim \mathcal A -\dim \cal B.$$
\end{Lemma}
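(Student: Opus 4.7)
The plan is to choose a generating set $\SS$ of $\A$ with $l(\SS)=l(\A)$ and track the filtration $\L_0(\SS)\subseteq \L_1(\SS)\subseteq\cdots\subseteq \L_{l(\A)}(\SS)=\A$ under $\varphi$. Since $\varphi$ is surjective, $\varphi(\SS)$ generates $\B$, and one has the basic identity $\varphi(\L_i(\SS))=\L_i(\varphi(\SS))$, because $\varphi$ is an algebra homomorphism and so sends words of length $\le i$ to words of length $\le i$ linearly.

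First I would dispense with the trivial case $l(\A)\le l(\B)$, in which the bound holds because $\dim\A\ge\dim\B$ (surjectivity). So assume $l(\A)>l(\B)$. Next I would record the standard observation that the chain of dimensions $\dim \L_i(\SS)$ is strictly increasing for $0\le i<l(\A)$: if $\L_i(\SS)=\L_{i+1}(\SS)$ for some such $i$, then $\L_i(\SS)$ is closed under left multiplication by elements of $\SS$, hence $\L_j(\SS)=\L_i(\SS)$ for every $j\ge i$, contradicting $l(\SS)=l(\A)>i$. Thus between $i=l(\B)$ and $i=l(\A)$ the dimension increases by at least $l(\A)-l(\B)$.

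Now I would use surjectivity to bound the two endpoints. From $\varphi(\L_{l(\B)}(\SS))=\L_{l(\B)}(\varphi(\SS))=\B$, the restriction $\varphi|_{\L_{l(\B)}(\SS)}\colon \L_{l(\B)}(\SS)\twoheadrightarrow \B$ is surjective, so $\dim \L_{l(\B)}(\SS)\ge \dim \B$. At the top, $\dim \L_{l(\A)}(\SS)=\dim\A$. Combining with the strict-increase inequality:
$$\dim\A-\dim\B\ \ge\ \dim\L_{l(\A)}(\SS)-\dim\L_{l(\B)}(\SS)\ \ge\ l(\A)-l(\B),$$
which rearranges to the claim.

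The only real subtlety is checking that the generating set $\SS$ realizing $l(\A)$ can be propagated to $\B$, which is immediate from the observation $\varphi(\L_i(\SS))=\L_i(\varphi(\SS))$ together with the surjectivity of $\varphi$; once this is in place the argument is just dimension counting along the word-length filtration. I do not expect any obstacle beyond recording these identities cleanly, and the proof mirrors the idea cited from \cite[Lemma~3.11]{GutM18}.
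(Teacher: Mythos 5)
Your proof is correct and follows essentially the same route as the paper: both use $\varphi(\L_i(\SS))=\L_i(\varphi(\SS))$ to get $\dim\L_{l(\B)}(\SS)\geq\dim\B$ and then count the strictly increasing dimensions along the word-length filtration. The only cosmetic difference is that you bound the gap between levels $l(\B)$ and $l(\A)$ while the paper steps forward $\dim\A-\dim\B$ levels from $l(\B)$; the dimension count is identical.
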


\begin{proof}
Consider an arbitrary generating set $\SS=\{a_1,\dots,a_k\}$ of the algebra $\A$.

Since the homomorphism $\varphi$ is surjective, we see that the set $\SS_{\mathcal{B}}=\{c_1=\varphi(a_1),\dots,c_k=\varphi(a_k)\}$ is a generating set of the algebra $\mathcal{B}$. Therefore, $\dim\L_{l(\mathcal{B})}(\SS_{\B})=\dim \B$. On the other hand, $\L_{l(\B)}(\SS_{\B})=\L_{l(\B)}(\varphi(\SS))=\varphi(\L_{l(\B)}(\SS))$. Therefore, $\dim\L_{l(\B)}(\SS)\geq \dim \varphi (\L_{l(\B)}(\SS))=\dim \L_{l(\B)}(\SS_{\B})=\dim \B$.

Since the dimensions $\L_{i}(\SS)$ must increase with $i$ until stabilization, we have $\dim\L_{l(\B)+\dim \A - \dim \B}(\SS)\geq \dim \B + (\dim \A - \dim \B) = \dim \A$. At the same time, the minimal $i$ such that $\dim \L_{i}(\SS) = \dim \A$, by definition, is $l(\SS)$. Due to the arbitrariness of $\SS$, we obtain $l(\mathcal A) \leq l (\mathcal B)+\dim \mathcal A -\dim \cal B$.

\end{proof}

\begin{Theorem}\label{lendn}
Let ${\cal D}_{n}$ be the dihedral group of order $2n$, $n\geq3$, $\F$ be an arbitrary field. Then\\
$l(\F{\cal D}_{n})=\begin{cases} n,\ \mbox{for odd}\; n;\\
n \ \mbox{or}\; n+1, \ \mbox{for even}\; n.
\end{cases}$
\end{Theorem}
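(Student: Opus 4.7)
The plan is to sandwich $l(\F\mathcal{D}_n)$ between matching lower and upper bounds obtained from results already established in the paper. The lower bound $l(\F\mathcal{D}_n)\geq n$ is immediate from Lemma \ref{Died}, which holds over an arbitrary field. So essentially all the work goes into producing a sharp enough upper bound.

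For the upper bound, the key observation is that the bicirculant representation $g\circ f:\F\mathcal{D}_n\to\mathcal{C}_n(\F)$ from Section \ref{bcsect} is a surjective homomorphism of algebras: Lemma \ref{imgf} already gives $\operatorname{Im}(g\circ f)=\mathcal{C}_n(\F)$. Thus I can apply Lemma \ref{sur} with $\mathcal{A}=\F\mathcal{D}_n$ and $\mathcal{B}=\mathcal{C}_n(\F)$ to obtain
$$l(\F\mathcal{D}_n)\leq l(\mathcal{C}_n(\F))+\dim\F\mathcal{D}_n-\dim\mathcal{C}_n(\F).$$
Now substitute the computed quantities: $l(\mathcal{C}_n(\F))=n-1$ by Theorem \ref{bclen}, $\dim\F\mathcal{D}_n=2n$ from the order of the group, and $\dim\mathcal{C}_n(\F)=2n-1$ for odd $n$, respectively $2n-2$ for even $n$, from Lemma \ref{bcdim}. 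A direct arithmetic check yields $l(\F\mathcal{D}_n)\leq n$ in the odd case and $l(\F\mathcal{D}_n)\leq n+1$ in the even case.

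Combining the two bounds gives exactly the stated conclusion: $l(\F\mathcal{D}_n)=n$ when $n$ is odd, and $l(\F\mathcal{D}_n)\in\{n,n+1\}$ when $n$ is even. There is no genuine obstacle left at this stage, since all the heavy lifting (computing $l(\mathcal{C}_n(\F))$, $\dim\mathcal{C}_n(\F)$, and proving surjectivity of $g\circ f$) was done in the preceding subsections. The only point worth noting is that the gap of one in the even case is intrinsic to this approach: the kernel of $g\circ f$ is two-dimensional by Lemma \ref{kergf}, so Lemma \ref{sur} costs two units of the estimate, whereas in the odd case the kernel is only one-dimensional and the bound tightens exactly to the value dictated by the lower bound. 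Closing this gap in the even case would require a more refined argument than Lemma \ref{sur}, and the theorem as stated reflects precisely what the current machinery produces.
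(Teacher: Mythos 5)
Your proposal is correct and follows exactly the same route as the paper: lower bound from Lemma \ref{Died}, upper bound by applying Lemma \ref{sur} to the surjective bicirculant representation $g\circ f$ together with Theorem \ref{bclen} and Lemma \ref{bcdim}. Your closing remark about the one-unit gap in the even case coming from the two-dimensional kernel matches the paper's own Remark following the theorem.
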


\begin{proof}

The lower bound is given by Lemma \ref{Died}. Let us prove the upper bound.

From Theorem \ref{bclen} it follows that $l(\C_n(\F))=n-1$. From Lemma \ref{bcdim} it follows that $\dim \C_n(\F)=2n-1$ for odd $n$, $\dim \C_n(\F)=2n-2$ for even $n$. Consider the homomorphism of algebras $g \circ f : \F \mathcal{D}_n \rightarrow \C_n(\F)$, described in Section \ref{bcsect}. Since by Lemma \ref{imgf} the homomorphism $g \circ f$ is surjective, we can apply Lemma \ref{sur} and get the upper bound $l(\F \mathcal{D}_n) \leq l (\C_n(\F))+\dim \F \mathcal{D}_n -\dim \C_n(\F)$. Then application of Theorem \ref{bclen}, Lemma \ref{bcdim} and the fact that $\dim \F \mathcal{D}_n = 2n$ completes the proof.

\end{proof}

\begin{Remark}
Despite the fact that among the possible values of $l(\F{\cal D}_{n})$ there is $n+1$, no real examples of algebras with this length have been found (and are not expected given Theorem \ref{Died_n}). The developed technique allows finding the exact value only for odd $n$, however, the obtained result is a noticeable advancement in the study of the lengths of group algebras of dihedral groups, demonstrating the usefulness of the bound proven in Theorem \ref{ldm} and the bicirculant representation.
\end{Remark}

\subsection{Bound for $m(\F{\cal D}_{n})$}\

Using the bicirculant representation, we get an estimate of $m(\F{\cal D}_{n})$.

\begin{Theorem}\label{mdn}
Let ${\cal D}_{n}$ be the dihedral group of order $2n$, $n\geq3$, $\F$ be an arbitrary field. Then\\
$m(\F{\cal D}_{n})\leq \begin{cases} n+1,\ \mbox{for odd}\; n;\\
 n+2, \ \mbox{for even}\; n.
\end{cases}$
\end{Theorem}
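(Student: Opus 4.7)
The plan is to exploit the bicirculant representation $\varphi = g\circ f : \F\mathcal{D}_n \to \C_n(\F)$ from Section~\ref{bcsect}, together with the very small dimension of $\ker\varphi$. The key observation is that for any $\tau\in\F\mathcal{D}_n$, if $\mu(x)$ is the minimal polynomial of the image $\varphi(\tau)$, then $\mu(\tau)$ lies in $\ker\varphi$, and $\ker\varphi$ is a tiny two-sided ideal by Lemma~\ref{kergf}. A second application of Cayley--Hamilton inside that ideal will then produce a short polynomial annihilating $\tau$ itself.

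Concretely, I would fix $\tau\in\F\mathcal{D}_n$ and set $\bar\tau = \varphi(\tau) \in \C_n(\F)\subseteq M_n(\F)$. Let $\mu(x)$ be the minimal polynomial of $\bar\tau$; by Cayley--Hamilton in $M_n(\F)$, $\deg\mu \leq n$, and $\mu(\tau)\in K := \ker\varphi$. Since $K$ is a two-sided ideal of $\F\mathcal{D}_n$, left multiplication by $\tau$ restricts to a linear operator $L_\tau : K\to K$. By Lemma~\ref{kergf}, $\dim K = 1$ for odd $n$ and $\dim K = 2$ for even $n$. Applying Cayley--Hamilton to $L_\tau \in \mathrm{End}(K)$ then yields a polynomial $p(x)$ with $\deg p \leq \dim K$ such that $p(\tau)\cdot z = 0$ for every $z\in K$.

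Combining the two steps, I would take $z = \mu(\tau) \in K$ and conclude $p(\tau)\mu(\tau) = 0$, so $\tau$ is annihilated by $p(x)\mu(x)$, a polynomial of degree at most $n + \dim K$. Since $\tau$ was arbitrary, this yields $m(\F\mathcal{D}_n)\leq n+1$ for odd $n$ and $m(\F\mathcal{D}_n)\leq n+2$ for even $n$, which is the claim.

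I do not expect any serious obstacle: the argument reduces to Cayley--Hamilton applied twice, once in $M_n(\F)$ for $\bar\tau$ and once in $\mathrm{End}(K)$ for $L_\tau$. The only step requiring a moment of care is the stability of $K$ under left multiplication by $\tau$, but this is automatic from $K$ being the kernel of an algebra homomorphism. In particular, no explicit computation with the generators of $K$ listed in Lemma~\ref{kergf} is needed beyond reading off $\dim K$, which is what lets us treat the odd and even cases uniformly.
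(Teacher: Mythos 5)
Your proposal is correct and follows essentially the same route as the paper: push $\tau$ down through the bicirculant representation, apply Cayley--Hamilton in $M_n(\F)$ to get $\mu(\tau)\in\ker(g\circ f)$, and then use that the kernel is an ideal of dimension $1$ (odd $n$) or $2$ (even $n$) to kill $\mu(\tau)$ with one or two further multiplications by $\tau$. The paper phrases this last step as linear dependence of $\mu(\tau),\mu(\tau)\tau,\dots$ inside the kernel rather than Cayley--Hamilton for $L_\tau$ on $\ker(g\circ f)$, but this is only a cosmetic difference.
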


\begin{proof}
Let $\tau \in \F{\cal D}_{n}$, $g \circ f : \F \mathcal{D}_n \rightarrow \C_n(\F)$ be the homomorphism of algebras described in Section \ref{bcsect}, $a$ be the rotation by an angle $\frac{2\pi}{n}$, $b$ be the symmetry.

Let $g \circ f(\tau) = T \in M_n(\F)$. Then by the Cayley-Hamilton theorem $m(T)=\deg \mu_T(x) \leq n$. Since $g \circ f(\mu_{T}(\tau))=\mu_T(T)=0$, we get $\mu_{T}(\tau) \in \ker g \circ f$. Next, consider two cases separately.

First case. Let $n$ be odd. Then from Lemma \ref{kergf} it follows that $\ker g \circ f$ is one-dimensional. On the other hand, the kernel of a homomorphism of algebras is an ideal, which means $\mu_{T}(\tau)$ and $\mu_{T}(\tau)\tau$ are linearly dependent. Thus, $m(\F{\cal D}_{n})\leq n+1$.

Second case. Let $n$ be even. Then from Lemma \ref{kergf} it follows that $\ker g \circ f$ is two-dimensional. On the other hand, the kernel of a homomorphism of algebras is an ideal, which means $\mu_{T}(\tau)$, $\mu_{T}(\tau)\tau$, and $\mu_{T}(\tau)\tau^2$ are linearly dependent. Thus, $m(\F{\cal D}_{n})\leq n+2$.

\end{proof}

\begin{Remark}
The main conjecture regarding the lengths of group algebras in the case of dihedral groups is that $l(\F{\cal D}_{n})=n$ for all $n\geq 3$ over an arbitrary field.
Due to Theorem \ref{ldm}, to prove the conjecture, it is sufficient to obtain an estimate $m(\F{\cal D}_{n})\leq n+1$. However, using an estimate from Theorem \ref{mdn}, we get the same result as presented in Theorem \ref{lendn}. Nevertheless, estimating $m(\F{\cal D}_{n})$ allows us to demonstrate another application of the Theorem \ref{ldm} and the bicirculant representation, and the study of numerical characteristics of algebras is of interest in itself.  

\end{Remark}


\end{document}